\newcolumntype{C}[1]{>{\centering}m{#1}}
\newtheorem{definition}{Definition}
\newtheorem{theorem}{Theorem}
\newtheorem{lemma}{Lemma}
\newcommand{\tr}{{{\mathsf T}}}
\title{\LARGE \bf
Decomposition and Completion of Sum-of-Squares Matrices
}
\author{Yang Zheng$^{\dagger}$, Giovanni Fantuzzi$^{\ddagger}$, and Antonis~Papachristodoulou$^{\dagger}$
\thanks{The work of Y. Zheng is supported in part by the Clarendon Scholarship, and in part by the Jason Hu Scholarship. The work of A. Papachristodoulou is supported by the EPSRC project EP/M002454/1.}
\thanks{$^{\dagger} $Y. Zheng, and A. Papachristodoulou are with Department of Engineering Science at the University of Oxford. (E-mail:        \{yang.zheng, antonis\}@eng.ox.ac.uk)}%
\thanks{$^{\ddagger}$G. Fantuzzi is with Department of Aeronautics, Imperial College London, South Kensington Campus. (E-mail:        gf910@ic.ac.uk)}
}
\begin{document}

\maketitle
\thispagestyle{empty}
\pagestyle{empty}

\begin{abstract}
    This paper introduces a notion of decomposition and completion of \emph{sum-of-squares} (SOS) matrices.
    We show that a subset of sparse SOS matrices with chordal sparsity patterns can be equivalently decomposed into a sum of multiple SOS matrices that are nonzero only on a principal submatrix. 
    Also, the completion of an SOS matrix is equivalent to a set of SOS conditions on its principal submatrices and a consistency condition on the Gram representation of the principal submatrices.
    These results are partial extensions of chordal decomposition and completion of scalar matrices to matrices with polynomial entries.  We apply the SOS decomposition result to exploit sparsity in matrix-valued SOS programs. Numerical results demonstrate the high potential of this approach for solving large-scale sparse matrix-valued SOS programs.
\end{abstract}

\section{Introduction}

Matrix decomposition and completion naturally appear in a wide range of applications and have attracted considerable research attention~\cite{vandenberghe2014chordal,candes2009exact,chandrasekaran2011rank,agler1988positive,griewank1984existence,
grone1984positive,fukuda2001exploiting,andersen2010implementation,ZFPGW2017chordal,ZFPGW2016}. Problems involving sparse and positive semidefinite (PSD) matrices are of particular interest~\cite{vandenberghe2014chordal}. Consider a toy example
\begin{equation} \label{E:ExPSDdecomposition}
    \underbrace{\begin{bmatrix} 2 & 1 & 0 \\ 1 & 1 & 1 \\ 0 & 1 & 2 \end{bmatrix}}_{X \succeq 0} =  \underbrace{\begin{bmatrix} 2 & 1 & 0 \\ 1 & 0.5 & 0 \\ 0 & 0 & 0 \end{bmatrix}}_{X_1 \succeq 0} + \underbrace{\begin{bmatrix} 0 & 0 & 0 \\ 0 & 0.5 & 1 \\ 0 & 1 & 2 \end{bmatrix}}_{X_1 \succeq 0},
\end{equation}
where we decompose a sparse PSD matrix $X$ into a sum of two PSD matrices $X_1$, $X_2$, and each of them only consists of one PSD principal submatrix. In fact, this kind of decomposition always exists for a class of PSD matrices with chordal sparsity patterns (a precise definition will be given in Section~\ref{Section:Preliminaries})~\cite{agler1988positive,griewank1984existence}.

A concept related to the matrix decomposition above is that of \emph{PSD matrix completion}. Consider a symmetric matrix with partially specified entries
\begin{equation} \label{E:ExPSDcompletion}
   Z = \begin{bmatrix}
        2 & 1 & ? \\
        1 & 0.5 & 1\\
        ? & 1 & 2
    \end{bmatrix},
\end{equation}
where the question mark $?$ denotes unspecified entries. The PSD matrix completion problem asks whether $Z$ can be completed into a PSD matrix by filling in the unspecified entries. Clearly, a necessary condition for the existence of such a completion is that the principal submatrices are PSD. It turns out that this condition is also sufficient for matrices with chordal sparsity patterns~\cite{grone1984positive}. For the matrix in~\eqref{E:ExPSDcompletion}, it is not difficult to check that the principal submatrices are PSD, and there exists a PSD completion as follows
\begin{equation*}
  \begin{bmatrix}
        2 & 1  \\
        1 & 0.5 \\
    \end{bmatrix} \succeq 0, \begin{bmatrix}
        0.5 & 1  \\
        1 & 2 \\
    \end{bmatrix} \succeq 0,  Z = \begin{bmatrix}
        2 & 1 & 2 \\
        1 & 0.5 & 1\\
        2 & 1 & 2
    \end{bmatrix} \succeq 0.
\end{equation*}

The decomposition and completion results for sparse PSD matrices~\cite{agler1988positive,griewank1984existence,grone1984positive} actually allow us to equivalently replace a large PSD constraint by a set of coupled smaller PSD constraints, which promises better scalability for computations. This feature underpins much of the recent research on exploiting sparsity in conic programs, either by interior-point methods~\cite{fukuda2001exploiting,andersen2010implementation} or by first-order methods~\cite{ZFPGW2017chordal,ZFPGW2016,sun2014decomposition}. Also, applications of the decomposition and completion results have recently emerged in systems analysis and synthesis~\cite{andersen2014robust,ZMP2018Scalable}, as well as optimal power flow problems~\cite{dall2013distributed}.

In this paper, we provide a partial extension of the results in~\cite{agler1988positive,griewank1984existence,grone1984positive} to sparse matrices with polynomial entries. In other words, we consider the problem of decomposing and completing sparse polynomial matrices, where each entry is a polynomial with real coefficients in $n$ variables $x_1, \ldots, x_n$. For example, given a PSD polynomial matrix with the same pattern as that in~\eqref{E:ExPSDdecomposition}
\begin{equation} \label{E:ExSOSdecomposition}
    \begin{bmatrix}
        x^2+1 & x & 0\\
        x& x^2-2x+3 & x+1\\
        0 & x+1 & x^2+2
    \end{bmatrix} \succeq 0, \forall x \in \mathbb{R},
\end{equation}
we aim to answer whether it can be decomposed into a sum of two PSD polynomial matrices of the form
$$
    \underbrace{\begin{bmatrix}
        * & * & 0\\
        * & * & 0\\
        0 & 0 & 0
    \end{bmatrix}}_{\succeq 0} + \underbrace{\begin{bmatrix}
        0 & 0 & 0\\
        0 & * & *\\
        0 & * & *
    \end{bmatrix}}_{\succeq 0},
$$
where $*$ denotes a polynomial in $x$. Also, we try to address whether the following matrix can be completed into a PSD polynomial matrix
\begin{equation} \label{E:ExSOScompletion}
    \begin{bmatrix}
        x^2 + 1 & x & ? \\
        x& x^2 - 2x + 2.5& x + 1 \\
        ? & x+1& x^2 + 2
    \end{bmatrix},
\end{equation}
by replacing $?$ with an appropriate polynomial.

Note that checking the positive semidefiniteness of a given symmetric polynomial matrix is NP-hard in general~\cite{parrilo2003semidefinite}. This paper focuses on a subset of PSD matrices given by \emph{sum-of-squares (SOS)} matrices, as these can be identified with polynomial-time algorithms using semidefinite programs (SDPs)~\cite{gatermann2004symmetry,scherer2006matrix,kojima2003sums}.
%
Our motivation is the fact that SOS techniques represent a powerful tool for systems analysis, control, and optimzation (see, \emph{e.g.},~\cite{parrilo2003semidefinite, blekherman2012semidefinite}), but they do not scale well with problem size.
Most existing approaches to mitigate the scalability issue are based on the SOS representations of scalar polynomials; see an overview of recent advances in~\cite{ahmadi2017improving}. This paper describes sufficient conditions to decompose sparse SOS matrices into smaller ones, and for the existence of a SOS completion of a partial polynomial matrix. Thus, sparsity can be exploited to reduce the cost of computing with large and sparse SOS matrices.


The notion of decomposition and completion of SOS matrices has not been reported in the literature before. In this paper, we use hypergraphs to combine the Gram representation of SOS matrices~\cite{gatermann2004symmetry,scherer2006matrix,kojima2003sums} with the normal decomposition and completion results~\cite{agler1988positive,griewank1984existence, grone1984positive}. We prove that 1) the decomposition results for scalar matrices~\cite{agler1988positive,griewank1984existence} can be extended to a subset of sparse SOS matrices, and 2) the conditions for the existence of an SOS completion are similar to those for scalar matrices~\cite{grone1984positive}, with the addition of a consistency condition.
%
Due to the non-uniqueness of their  Gram matrix representation, however, our results of decomposition and completion for SOS matrices are not identical to those for scalar matrices in~\cite{agler1988positive,griewank1984existence, grone1984positive}.  As a direct application, we use the new decomposition result to exploit sparsity in matrix-value SOS programs. 
 Preliminary numerical results show the effectiveness of this approach.

The rest of this paper is organized as follows. Section~\ref{Section:Preliminaries} presents some preliminaries on chordal graphs. The results on decomposition and completion of sparse SOS matrices are given in Section~\ref{Section:SOSmatrices}. Section~\ref{Section:Application} discusses an application to matrix-valued SOS programs, including preliminary numerical examples. We conclude this paper in Section~\ref{Section:Conclusion}.

\section{Preliminaries on chordal graphs} \label{Section:Preliminaries}

The sparsity patterns for the matrices in~\eqref{E:ExPSDdecomposition} and~\eqref{E:ExPSDcompletion} can be commonly characterized by \emph{chordal graphs}. This section reviews chordal graphs and their relation to the decomposition and completion of sparse PSD matrices.

\subsection{Chordal graphs}
We define a graph $\mathcal{G}(\mathcal{V},\mathcal{E})$ by a set of nodes $\mathcal{V}=\{1,2,\dots, r\}$ and a set of edges $\mathcal{E} \subseteq \mathcal{V} \times \mathcal{V}$. Graph $\mathcal{G}(\mathcal{V},\mathcal{E})$ is called undirected if and only if $(i,j) \in \mathcal{E} \Leftrightarrow (j,i) \in \mathcal{E}$.  A \emph{cycle} of length $k$ is a sequence of nodes $\{v_1, v_2, \ldots, v_k\} \subseteq \mathcal{V}$ with $(v_k, v_{1}) \in \mathcal{E}$ and $(v_i, v_{i+1}) \in \mathcal{E}, \forall i = 1, \ldots, k-1$. A \emph{chord} in a cycle $\{v_1, v_2, \ldots, v_k\}$ is an edge $(v_i,v_j)$ that joins two nonconsecutive nodes in the cycle.

\begin{definition}[Chordal graph]
    An undirected graph is chordal if all its cycles of length at least four have a chord.
\end{definition}

Chordal graphs include some common classes of graphs, such as complete graphs, line graphs and trees. Fig.~\ref{F:ChordalGraph} illustrates some examples. We note that any non-chordal graph $\mathcal{G}(\mathcal{V},\mathcal{E})$ can always be extended to a chordal graph $\mathcal{G}(\mathcal{V},\hat{\mathcal{E}})$ by adding appropriate edges to $\mathcal{E}$~\cite{vandenberghe2014chordal}. Finally, we introduce the concept of \emph{cliques}: a \emph{clique} $\mathcal{C} \subseteq \mathcal{V}$ is a subset of nodes where $(i,j) \in \mathcal{E}, \forall i,j \in \mathcal{C}, i \neq j$. If a clique $\mathcal{C}$ is not a subset of  any other clique, then it is called a \emph{maximal clique}. For example, in Fig.~\ref{F:ChordalGraph}(a), there are two maximal cliques, $\mathcal{C}_1 = \{1, 2\}$ and $\mathcal{C}_2 = \{2, 3\}$.

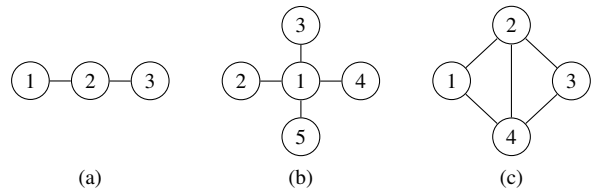
\begin{figure}[t]
    \centering
    \footnotesize
    \begin{tikzpicture}
	  \matrix (m) [matrix of nodes,
	  		       row sep = 0.8em,	
	  		       column sep = 1em,	
  			       nodes={circle, draw=black}] at (-2.8,0)
  		{ &  & \\ 1 & 2 & 3 \\ & &\\};
		\draw (m-2-1) -- (m-2-2);
		\draw (m-2-2) -- (m-2-3);
		\node at (-2.8,-1.3) {(a)};
		\matrix (m2) [matrix of nodes,
	  		       row sep = 0.8em,	
	  		       column sep = 1em,	
  			       nodes={circle, draw=black}] at (0,0)
        { & 3 & \\ 2 & 1 & 4 \\& 5 &\\};
		\draw (m2-1-2) -- (m2-2-2);
		\draw (m2-2-1) -- (m2-2-2);
		\draw (m2-2-2) -- (m2-2-3);
		\draw (m2-2-2) -- (m2-3-2);
		\node at (0,-1.3) {(b)};

		\matrix (m3) [matrix of nodes,
	  		       row sep = 0.8em,	
	  		       column sep = 1.em,	
  			       nodes={circle, draw=black}] at (2.8,0)
        { & 2 & \\ 1 &  & 3 \\& 4 &\\};
		\draw (m3-1-2) -- (m3-2-1);
		\draw (m3-1-2) -- (m3-2-3);
		\draw (m3-2-1) -- (m3-3-2);
		\draw (m3-2-3) -- (m3-3-2);
        \draw (m3-1-2) -- (m3-3-2);
		\node at (2.8,-1.3) {(c)};
	\end{tikzpicture}
    \caption{Examples of chordal graphs: (a) a line graph; (b) a star graph; (c) a triangulated graph.}
    \label{F:ChordalGraph}
\end{figure}

\subsection{Chordal decomposition and completion}

Given an undirected graph $\mathcal{G}(\mathcal{V},\mathcal{E})$, we define an extended set of edges $\mathcal{E}^* = \mathcal{E} \cup\{(i,i), \forall i \in \mathcal{V}\}$ that includes all self-loops. Then, we define the space of symmetric matrices as
\begin{equation} \label{E:SparseSymMatrix}
    \mathbb{S}^r(\mathcal{E},0) := \{X \in \mathbb{S}^{r} | X_{ij} = X_{ji}= 0\; \text{if} \; (j,i) \notin {\mathcal{E}}^* \},
\end{equation}
and the cone of sparse PSD matrices as
\begin{equation}
    \mathbb{S}^r_+(\mathcal{E},0) := \{X \in \mathbb{S}^{r}(\mathcal{E},0) | X \succeq 0 \}.
\end{equation}
Also, we denote by $\mathbb{S}^r_+(\mathcal{E},?)$ the set of matrices in $\mathbb{S}^r(\mathcal{E},0)$ that have a PSD completion, \emph{i.e.},
\begin{equation}\label{E:ConePSD}
    \mathbb{S}^r_+(\mathcal{E},?) := \mathbb{P}_{\mathbb{S}^r(\mathcal{E},0)}(\mathbb{S}^r_{+}),
\end{equation}
where $\mathbb{P}_{\mathbb{S}^r(\mathcal{E},0)}$ denotes the projection onto $\mathbb{S}^r(\mathcal{E},0)$.
Given a maximal clique $\mathcal{C}_k$, we define a matrix $E_{\mathcal{C}_k} \in \mathbb{R}^{|\mathcal{C}_k| \times r}$ as
\begin{equation} \label{E:IndexMatrix}
    (E_{\mathcal{C}_k})_{ij} := \begin{cases} 1, \quad \text{if } \mathcal{C}_k(i) = j, \\ 0, \quad \text{otherwise}, \end{cases}
\end{equation}
where $|\mathcal{C}_k|$ denotes the number of nodes in $\mathcal{C}_k$, and $\mathcal{C}_k(i)$ denotes the $i$-th node in $\mathcal{C}_k$, sorted in the natural ordering. Note that $X_k = E_{\mathcal{C}_k}XE_{\mathcal{C}_k}^\tr \in \mathbb{S}^{|\mathcal{C}_k|}$ extracts a principal submatrix defined by the indicies in clique $\mathcal{C}_k$, and the operation $E_{\mathcal{C}_k}^\tr X_kE_{\mathcal{C}_k}$ {inflates} a $\vert\mathcal{C}_k\vert \times \vert \mathcal{C}_k\vert $ matrix into a sparse $r\times r$ matrix.
Then, the following results characterize, respectively, the membership to the sets $\mathbb{S}^n_{+}(\mathcal{E},0)$ and $\mathbb{S}^n_+(\mathcal{E},?)$ when the underlying graph $\mathcal{G}(\mathcal{V},\mathcal{E})$ is chordal.

\begin{theorem}[\!\!\cite{agler1988positive,griewank1984existence}]\label{T:ChordalDecompositionTheorem}
     Let $\mathcal{G}(\mathcal{V},\mathcal{E})$ be a chordal graph with maximal cliques $\{\mathcal{C}_1,\mathcal{C}_2, \ldots, \mathcal{C}_t\}$. Then, $X\in\mathbb{S}^r_{+}(\mathcal{E},0)$ if and only if there exist $X_k \in \mathbb{S}^{\vert \mathcal{C}_k \vert}_+, k=1,\,\ldots,\,t$, such that
\end{theorem}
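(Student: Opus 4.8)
The plan is to establish the two implications separately; the converse direction is the substantial one. For the \emph{if} direction, suppose $X = \sum_{k=1}^{t} E_{\mathcal{C}_k}^\tr X_k E_{\mathcal{C}_k}$ with each $X_k \in \mathbb{S}^{|\mathcal{C}_k|}_{+}$. Every summand is positive semidefinite, since $v^\tr E_{\mathcal{C}_k}^\tr X_k E_{\mathcal{C}_k} v = (E_{\mathcal{C}_k}v)^\tr X_k (E_{\mathcal{C}_k}v) \ge 0$ for all $v \in \mathbb{R}^r$, and its only nonzero entries sit in the principal block indexed by the clique $\mathcal{C}_k$, so each summand lies in $\mathbb{S}^r(\mathcal{E},0)$. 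Since sums of positive semidefinite sparse matrices stay in $\mathbb{S}^r_{+}(\mathcal{E},0)$, we get $X \in \mathbb{S}^r_{+}(\mathcal{E},0)$.

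For the \emph{only if} direction I would induct on the number $t$ of maximal cliques, using the structural fact that a chordal graph admits a clique tree with the running intersection property~\cite{vandenberghe2014chordal}. The base case $t=1$ is trivial: $\mathcal{C}_1 = \mathcal{V}$ and one takes $X_1 = X$. For $t \ge 2$, pick $\mathcal{C}_t$ to be a leaf of such a clique tree, let $\mathcal{C}_s$ be its parent, and set $S := \mathcal{C}_t \cap \mathcal{C}_s$. The running intersection property guarantees that $U := \mathcal{C}_t \setminus S$ is nonempty (by maximality of $\mathcal{C}_t$) and that vertices in $U$ are adjacent only to vertices of $\mathcal{C}_t$. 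Hence, after a symmetric permutation grouping the indices as $U$, $S$, $R := \mathcal{V}\setminus\mathcal{C}_t$, the matrix acquires the arrow structure
\begin{equation*}
    X = \begin{bmatrix} A & B & 0 \\ B^\tr & C & D \\ 0 & D^\tr & F \end{bmatrix} \succeq 0 ,
\end{equation*}
with $A = X_{UU}$, $B = X_{US}$, $C = X_{SS}$, $D = X_{SR}$, $F = X_{RR}$.

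The key step is to peel off the clique $\mathcal{C}_t$. Positive semidefiniteness of the principal block on $U \cup S$ gives $A \succeq 0$, the columns of $B$ in the range of $A$, and the generalized Schur complement $C - B^\tr A^\dagger B \succeq 0$. Define $X_t := \begin{bmatrix} A & B \\ B^\tr & B^\tr A^\dagger B \end{bmatrix} \succeq 0$ and $\tilde X := X - E_{\mathcal{C}_t}^\tr X_t E_{\mathcal{C}_t}$. Then $\tilde X$ vanishes on every row and column indexed by $U$, and its restriction to $\mathcal{V}\setminus U$ equals $\begin{bmatrix} C - B^\tr A^\dagger B & D \\ D^\tr & F \end{bmatrix}$, which is precisely the Schur complement of $A$ in $X$ and hence positive semidefinite. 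Moreover $\tilde X$ is sparse with respect to the subgraph of $\mathcal{G}$ induced on $\mathcal{V}\setminus U$, which is chordal with maximal cliques exactly $\mathcal{C}_1,\dots,\mathcal{C}_{t-1}$ (deleting $U$ absorbs $\mathcal{C}_t\setminus U = S$ into $\mathcal{C}_s$). Applying the inductive hypothesis to $\tilde X$ and inflating back to $\mathbb{R}^r$ (consistent since each $\mathcal{C}_k \subseteq \mathcal{V}\setminus U$) yields $\tilde X = \sum_{k=1}^{t-1} E_{\mathcal{C}_k}^\tr X_k E_{\mathcal{C}_k}$ with $X_k \succeq 0$, and therefore $X = \sum_{k=1}^{t} E_{\mathcal{C}_k}^\tr X_k E_{\mathcal{C}_k}$, completing the induction.

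I expect the main difficulty to be bookkeeping rather than any deep idea: carefully invoking the clique tree / running intersection property to obtain the arrow structure, handling the generalized Schur complement together with the range conditions it requires (all consequences of positive semidefiniteness of the relevant $2\times2$ block matrices), and verifying that removing $U$ leaves a chordal graph with exactly $t-1$ maximal cliques. A slightly different route, avoiding clique trees, would induct on $|\mathcal{V}|$ by repeatedly eliminating a simplicial vertex, writing its rank-one contribution on a maximal clique containing its closed neighbourhood, and finally regrouping the rank-one terms by maximal clique; this needs an extra grouping argument but only uses the existence of a perfect elimination ordering.
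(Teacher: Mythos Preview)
The paper does not prove this statement; Theorem~\ref{T:ChordalDecompositionTheorem} is quoted as a known result from~\cite{agler1988positive,griewank1984existence} and used as a black box, so there is no ``paper's own proof'' to compare against.

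That said, your argument is essentially the standard one and is correct. The \emph{if} direction is immediate. For the \emph{only if} direction, your induction on the number of maximal cliques via a leaf of a clique tree is exactly the classical route: the running intersection property guarantees that the vertices $U=\mathcal{C}_t\setminus S$ are private to $\mathcal{C}_t$ and adjacent only inside $\mathcal{C}_t$, which produces the arrow structure; the generalized Schur complement peels off a PSD block $X_t$ supported on $\mathcal{C}_t$; and the residual $\tilde X$ is PSD with the sparsity of the induced subgraph on $\mathcal{V}\setminus U$, which is chordal with maximal cliques precisely $\mathcal{C}_1,\dots,\mathcal{C}_{t-1}$. The only places worth a line of extra care are (i) the range condition $\mathrm{ran}(B)\subseteq\mathrm{ran}(A)$ so that $A^\dagger$ behaves as expected and $X_t\succeq 0$, and (ii) the verification that no $\mathcal{C}_k$ with $k<t$ loses maximality or coincides with another after deleting $U$; both are routine, and you already flagged them. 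Your alternative outline via simplicial-vertex elimination is also a well-known variant and would work equally well.
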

 \begin{equation*} 
        X = \sum_{k=1}^{t} E_{\mathcal{C}_k}^\tr X_k E_{\mathcal{C}_k}.
 \end{equation*}
\begin{theorem} [{\!\!\cite{grone1984positive}}]\label{T:ChordalCompletionTheorem}
     Let $\mathcal{G}(\mathcal{V},\mathcal{E})$ be a chordal graph with maximal cliques $\{\mathcal{C}_1,\mathcal{C}_2, \ldots, \mathcal{C}_t\}$. Then, $Z\in\mathbb{S}^r_+(\mathcal{E},?)$ if and only if
\end{theorem}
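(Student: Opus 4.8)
The plan is to deduce Theorem~\ref{T:ChordalCompletionTheorem} from the decomposition result of Theorem~\ref{T:ChordalDecompositionTheorem} by conic duality, with all dual cones taken inside the subspace $V := \mathbb{S}^r(\mathcal{E},0)$ equipped with the trace inner product $\langle X,Y\rangle := \mathrm{tr}(XY)$; write $K^\circ := \{Y\in V : \langle Y,X\rangle\ge 0 \text{ for all }X\in K\}$ for the dual of a cone $K\subseteq V$ relative to $V$. The direction ``only if'' is immediate: if $Z$ admits a PSD completion $\hat Z\succeq 0$ then each $E_{\mathcal{C}_k}ZE_{\mathcal{C}_k}^\tr = E_{\mathcal{C}_k}\hat ZE_{\mathcal{C}_k}^\tr$ is a principal submatrix of a PSD matrix, hence PSD, so the stated condition $E_{\mathcal{C}_k}ZE_{\mathcal{C}_k}^\tr\succeq 0$ for $k=1,\dots,t$ is necessary.

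For the converse I would assemble three facts. First, $\mathbb{S}^r_+(\mathcal{E},?)$ and $\mathbb{S}^r_+(\mathcal{E},0)$ are mutually dual inside $V$: because the orthogonal projection $\mathbb{P}_V$ onto $V$ is self-adjoint and fixes $V$ pointwise, one has $(\mathbb{P}_V(C))^\circ = C^*\cap V$ for every closed convex cone $C\subseteq\mathbb{S}^r$, where $C^*$ is the dual of $C$ in $\mathbb{S}^r$; with $C=\mathbb{S}^r_+$ and self-duality of the PSD cone this gives $(\mathbb{S}^r_+(\mathcal{E},?))^\circ = \mathbb{S}^r_+\cap V = \mathbb{S}^r_+(\mathcal{E},0)$, and since $\mathbb{S}^r_+(\mathcal{E},?)$ is a closed convex cone the bipolar theorem also yields $(\mathbb{S}^r_+(\mathcal{E},0))^\circ = \mathbb{S}^r_+(\mathcal{E},?)$. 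Second, for each maximal clique $\mathcal{C}_k$ the identity $\langle E_{\mathcal{C}_k}^\tr SE_{\mathcal{C}_k},Y\rangle = \langle S,E_{\mathcal{C}_k}YE_{\mathcal{C}_k}^\tr\rangle$, valid for all $S\in\mathbb{S}^{|\mathcal{C}_k|}$, together with self-duality of the PSD cone, shows
\[
  \big(E_{\mathcal{C}_k}^\tr\mathbb{S}^{|\mathcal{C}_k|}_+E_{\mathcal{C}_k}\big)^\circ = \{Y\in V : E_{\mathcal{C}_k}YE_{\mathcal{C}_k}^\tr\succeq 0\}.
\]
Third, for cones $K_1,\dots,K_t$ each containing the origin, $\big(\sum_{k=1}^tK_k\big)^\circ=\bigcap_{k=1}^tK_k^\circ$, with no closure required. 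Substituting the decomposition $\mathbb{S}^r_+(\mathcal{E},0)=\sum_{k=1}^tE_{\mathcal{C}_k}^\tr\mathbb{S}^{|\mathcal{C}_k|}_+E_{\mathcal{C}_k}$ from Theorem~\ref{T:ChordalDecompositionTheorem} into the third fact and applying the first two then gives
\[
  \mathbb{S}^r_+(\mathcal{E},?)=\big(\mathbb{S}^r_+(\mathcal{E},0)\big)^\circ=\bigcap_{k=1}^t\{Y\in V : E_{\mathcal{C}_k}YE_{\mathcal{C}_k}^\tr\succeq 0\},
\]
which is exactly the claimed equivalence.

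The step I expect to be the main obstacle is the first one: verifying that $\mathbb{S}^r_+(\mathcal{E},?)$ is a closed cone so that the bipolar theorem applies, and keeping the bookkeeping clean between duality in the full space $\mathbb{S}^r$ and duality inside the subspace $V$, where it is easy to drop a ``$\cap V$'' and be off by a subspace. A self-contained alternative that avoids duality entirely is the classical constructive argument: order the maximal cliques along a clique tree with the running-intersection property and complete $Z$ one clique at a time; at each step the free entries are those between the nodes newly introduced by the current clique and the nodes lying outside it, and the one-step block completion lemma --- set that block equal to $BC^{\dagger}D$, with $C^{\dagger}$ the pseudoinverse of the shared separator block $C$, after checking the range conditions guaranteed by positive semidefiniteness of the two overlapping principal blocks --- extends the partial matrix to a PSD matrix on the enlarged index set without disturbing previously fixed entries; the running-intersection property ensures consistency across steps, so the process terminates with a global PSD completion. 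I would present the duality argument as the main proof and note the constructive variant in a remark.
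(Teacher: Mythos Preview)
The paper does not prove Theorem~\ref{T:ChordalCompletionTheorem}; it is quoted as a preliminary result from~\cite{grone1984positive} and used without proof, so there is no in-paper argument to compare against. That said, your proposal is correct. The duality route is the standard modern derivation: the cones $\mathbb{S}^r_+(\mathcal{E},0)$ and $\mathbb{S}^r_+(\mathcal{E},?)$ are mutually dual inside $\mathbb{S}^r(\mathcal{E},0)$, so Theorems~\ref{T:ChordalDecompositionTheorem} and~\ref{T:ChordalCompletionTheorem} become equivalent via the bipolar theorem, and your three facts are stated and used accurately. The closedness you flag as the obstacle is easy to dispatch: for any sequence $Z_m=\mathbb{P}_V(X_m)$ with $X_m\succeq 0$ and $Z_m\to Z$, the diagonal of $X_m$ coincides with that of $Z_m$ and is therefore bounded, so $|(X_m)_{ij}|\le\sqrt{(X_m)_{ii}(X_m)_{jj}}$ bounds every entry of $X_m$; a convergent subsequence yields a PSD completion of $Z$.

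For context, the constructive clique-tree argument you sketch as an alternative is essentially the original proof in~\cite{grone1984positive}. The two approaches trade off as you say: the duality proof is short and conceptually clean but nonconstructive, whereas the running-intersection argument actually produces a completion and is what underlies the numerical completion routines referenced in the paper (e.g.,~\cite{fukuda2001exploiting,vandenberghe2014chordal}). Presenting duality as the main proof with the constructive variant in a remark is a reasonable choice.
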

    $$
        E_{\mathcal{C}_k} Z E_{\mathcal{C}_k}^\tr \in \mathbb{S}^{\vert \mathcal{C}_k \vert}_+, \quad k=1,\,\ldots,\,t.
    $$

Given a chordal graph $\mathcal{G}(\mathcal{V},\mathcal{E})$, according to Theorem~\ref{T:ChordalDecompositionTheorem}, a sparse PSD matrix $X \in \mathbb{S}^r_{+}(\mathcal{E},0)$ can always be written as a sum of multiple PSD matrices that are nonzero only on a principal submatrix. For example, the matrix $X$ in~\eqref{E:ExPSDdecomposition} has a sparsity pattern corresponding to Fig.~\ref{F:ChordalGraph}(a). Then, we have
$$
    E_{\mathcal{C}_1} = \begin{bmatrix} 1&0&0 \\0& 1& 0  \end{bmatrix}, E_{\mathcal{C}_2} = \begin{bmatrix} 0&1&0 \\0& 0& 1  \end{bmatrix},
$$
and
$$
     \begin{bmatrix} 2 & 1 & 0 \\ 1 & 1 & 1 \\ 0 & 1 & 2 \end{bmatrix} =  E_{\mathcal{C}_1}^\tr\underbrace{\begin{bmatrix} 2 & 1  \\ 1 & 0.5  \end{bmatrix}}_{\succeq 0}E_{\mathcal{C}_1}  + E_{\mathcal{C}_2}^\tr\underbrace{\begin{bmatrix} 0.5 & 1 \\  1 & 2 \end{bmatrix}}_{\succeq 0}E_{\mathcal{C}_2}.
$$
Similarly, Theorem 2 states that the matrix $X$ in (2) has a PSD completion if and only if the $2 \times 2$ principal submatrices $E_{\mathcal{C}_1} Z E_{\mathcal{C}_1}^\tr$ and $E_{\mathcal{C}_2} Z E_{\mathcal{C}_2}^\tr$ are PSD, which is easy to verify. These simple examples illustrate that constraints of the form $X \in \mathbb{S}_+(\mathcal{E},0)$ or $Z \in \mathbb{S}_+(\mathcal{E},?)$ can be replaced by PSD constraints on certain principal submatrices, provided that its sparsity patten is chordal. 
%
This feature has been exploited successfully to improve the scalability of solving large-scale sparse SDPs in~\cite{fukuda2001exploiting, andersen2010implementation, ZFPGW2017chordal, ZFPGW2016,sun2014decomposition}.


%

\section{Decomposition \& completion of SOS matrices} \label{Section:SOSmatrices}

In this section, we apply Theorems~\ref{T:ChordalDecompositionTheorem} and~\ref{T:ChordalCompletionTheorem} to the decomposition and completion of a class of PSD matrices with polynomial entries.

\subsection{Nonnegativity and sum-of-squares}

We denote by $\mathbb{R}[x]_{n,2d}$ the set of polynomials in $n$ variables with real coefficients of degree no more than $2d$.  The set of $q \times r$ polynomial matrices with entries in $\mathbb{R}[x]_{n,2d}$ is denoted by $\mathbb{R}[x]_{n,2d}^{q \times r}$. A polynomial $p(x) \in \mathbb{R}[x]_{n,2d}$ is \emph{nonnegative} or PSD if $p(x) \geq 0, \forall\, x \in \mathbb{R}^n$, and a symmetric polynomial matrix $P(x) \in \mathbb{R}[x]_{n,2d}^{r \times r}$ is PSD if $P(x) \succeq 0, \forall\, x \in \mathbb{R}^n$.

Note that checking positive semidefiniteness of a polynomial $p(x)$ or a polynomial matrix $P(x)$ is NP-hard in general~\cite{parrilo2003semidefinite}.
%
A popular tractable approach is to replace the PSD constraint by a sum-of-squares (SOS) constraint. We say that $p(x)\in \mathbb{R}[x]_{n,2d}$ is an SOS polynomial if there exists polynomials $f_i(x) \in \mathbb{R}[x]_{n,d}, i = 1, \ldots, s$ such that
$
	p(x) = \sum_{i=1}^s f^2_i(x).
$
Also, we define an SOS matrix as follows~\cite{gatermann2004symmetry,scherer2006matrix,kojima2003sums}.
\begin{definition}
    A symmetric polynomial matrix $P(x) \in \mathbb{R}[x]_{n,2d}^{r \times r}$ is an SOS matrix if there exists a polynomial matrix $M(x) \in \mathbb{R}[x]_{n,d}^{s \times r}$ such that $P(x) = M^\tr(x)M(x)$.
\end{definition}

Clearly, the existence of an SOS representation ensures the positive semidefiniteness of $p(x)$ or $P(x)$. For simplicity, we denote by $\Sigma_{n,2d}^{r}$ the set of $r \times r$ SOS matrices with entries in $\mathbb{R}[x]_{n,2d}$. It is known that the problem of checking membership of $\Sigma_{n,2d}^{r}$ can be cast as an SDP.
\begin{lemma}[\!\!\cite{gatermann2004symmetry,scherer2006matrix,kojima2003sums}] \label{Lemma:SOSlemma}
    $P(x) \in \Sigma_{n,2d}^{r}$ if and only if there exists a PSD matrix $Q \in \mathbb{S}^l_+$ with $l= r\times N $ and $N = {n+d \choose d}$ such that
    \begin{equation} \label{E:matrixSOS}
        P(x) = \left(I_r \otimes v_d(x)\right)^\tr  Q \left(I_r \otimes v_d(x)\right),
    \end{equation}
    where $v_d(x) = [ 1,x_1,x_2,\ldots,x_n,x_1^2,x_1x_2,\ldots,x_n^d ]^\tr$ is the standard monomial vector of degree up to $d$.
\end{lemma}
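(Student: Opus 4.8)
The plan is to establish both directions by reducing everything to a single linear-algebra fact: a symmetric matrix is PSD precisely when it factors as $C^\tr C$ for some constant matrix $C$. The bridge between polynomials and constant matrices is the observation that, because every entry of a matrix in $\mathbb{R}[x]_{n,d}^{s\times r}$ is a linear combination of the $N=\binom{n+d}{d}$ monomials stored in $v_d(x)$, any such matrix can be written as $C\,(I_r \otimes v_d(x))$ for a unique constant $C \in \mathbb{R}^{s\times rN}$ — concretely, the $j$-th block of $N$ columns of $C$ holds the coefficient vectors of the $j$-th column of the polynomial matrix.

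For the ``only if'' direction I would start from a representation $P(x) = M^\tr(x)M(x)$ with $M \in \mathbb{R}[x]_{n,d}^{s\times r}$ (which exists by the definition of an SOS matrix), rewrite $M(x) = C\,(I_r \otimes v_d(x))$ as above, and substitute to obtain $P(x) = (I_r\otimes v_d(x))^\tr (C^\tr C)(I_r\otimes v_d(x))$; then $Q := C^\tr C \in \mathbb{S}^l_+$ with $l = rN$ is the desired Gram matrix. For the converse I would run this argument in reverse: given $Q \in \mathbb{S}^l_+$ realizing $P(x)$ via~\eqref{E:matrixSOS}, take any factorization $Q = C^\tr C$ (a Cholesky factorization, or an eigendecomposition, with $s = \operatorname{rank} Q$ rows) and set $M(x) := C\,(I_r \otimes v_d(x)) \in \mathbb{R}[x]_{n,d}^{s\times r}$, so that $P(x) = M^\tr(x)M(x)$ exhibits $P$ as an SOS matrix.

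Since the two constructions are mutually inverse, there is no genuine analytic obstacle here; the only point needing care is the Kronecker/block bookkeeping — verifying that $C\,(I_r\otimes v_d(x))$ indeed reproduces an arbitrary degree-$d$ polynomial matrix column by column, and that the quadratic form $(I_r\otimes v_d(x))^\tr Q (I_r\otimes v_d(x))$ has entries of degree at most $2d$, so that it legitimately lies in $\mathbb{R}[x]_{n,2d}^{r\times r}$. One should also record that the size parameter is exactly $l = r\times N$ because $v_d(x)$ has $N=\binom{n+d}{d}$ entries, which matches the statement.
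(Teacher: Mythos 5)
Your argument is correct and is the standard Gram-matrix proof: write $M(x) = C\,(I_r\otimes v_d(x))$ using linear independence of the monomials, set $Q = C^\tr C$ for one direction, and factor $Q = C^\tr C$ for the other. The paper states this lemma as a known result with citations and gives no proof of its own, and your reasoning matches the standard argument in the cited references, so there is nothing to flag.
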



The matrix $Q$ in~\eqref{E:matrixSOS} is called the \emph{Gram matrix} of the SOS representation, which is usually not unique.

\subsection{Decomposition of sparse SOS matrices}

Similar to the sparse scalar matrix case~\eqref{E:SparseSymMatrix}, we define sparse SOS matrices characterized by an undirected graph $\mathcal{G}(\mathcal{V},\mathcal{E})$,
\begin{equation*}
    \begin{aligned}
        \Sigma_{n,2d}^r(\mathcal{E},0) = \bigg\{P(x) \in \Sigma_{n,2d}^r  \bigg\vert  & p_{ij}(x) = p_{ji}(x)=0, \\
          &\qquad \quad \; \text{if } (i,j) \notin \mathcal{E}^* \bigg\}.
    \end{aligned}
\end{equation*}

Given a sparse SOS matrix $P(x)$, according to Lemma~\ref{Lemma:SOSlemma}, its SOS representation can be written as
$$
    P(x) = \begin{bmatrix} v_d(x)^\tr Q_{11} v_d(x) & \ldots & v_d(x)^\tr Q_{1r} v_d(x) \\
    \vdots & \ddots & \vdots \\
    v_d(x)^\tr Q_{r1} v_d(x) & \ldots & v_d(x)^\tr Q_{rr} v_d(x)  \end{bmatrix},
$$
where $Q_{ij} \in \mathbb{R}^{N \times N}$, $i,j = 1,\ldots,r$ is the $(i,j)$-th block of matrix $Q \in \mathbb{S}^{l}_+$. If $ P(x)\in \Sigma_{n,2d}^r(\mathcal{E},0)$, then we have
\begin{equation} \label{E:ZeroPolynomial}
    p_{ij}(x) = v_d(x)^\tr Q_{ij} v_d(x) = 0, \text{if } (i,j) \notin \mathcal{E}^*.
\end{equation}
Still, $Q_{ij}$ may be a nonzero matrix in~\eqref{E:ZeroPolynomial}. Note that while $Q$ is a symmetric matrix, the $ij$-th off-diagonal block $Q_{ij}$ need not be so. This means the Gram matrix $Q$ for a sparse SOS matrix $ P(x)\in \Sigma_{n,2d}^r(\mathcal{E},0)$ can be dense. To maintain the sparsity of $P(x)$ in the Gram matrix $Q$, we consider a subset of SOS matrices
\begin{equation*}
    \begin{aligned}
        \widetilde{\Sigma}_{n,2d}^r&(\mathcal{E},0) = \bigg\{P(x) \in \Sigma_{n,2d}^r(\mathcal{E},0) \bigg\vert  P(x) \text{~admits a} \\
        & \text{Gram matrix~} Q \succeq 0, \text{~with~} Q_{ij} = 0 \text{~when~} p_{ij}(x) = 0 \bigg\}.
    \end{aligned}
\end{equation*}

With this restriction, we have the following result.
\begin{theorem}[SOS matrix decomposition]\label{T:SOSDecompositionTheorem}
     Let $\mathcal{G}(\mathcal{V},\mathcal{E})$ be a chordal graph with maximal cliques $\{\mathcal{C}_1,\mathcal{C}_2, \ldots, \mathcal{C}_t\}$. Then, $P(x) \in \widetilde{\Sigma}_{n,2d}^r(\mathcal{E},0)$ if and only if there exist $P_k(x) \in \Sigma_{n,2d}^{|\mathcal{C}_k|}, k=1,\,\ldots,\,t$, such that
\end{theorem}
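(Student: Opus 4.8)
The plan is to reduce the statement to the scalar chordal decomposition result (Theorem~\ref{T:ChordalDecompositionTheorem}) applied to the Gram matrix together with a suitably ``blown-up'' graph. The starting point is Lemma~\ref{Lemma:SOSlemma}: an SOS matrix $P(x)$ is represented by a Gram matrix $Q \in \mathbb{S}^{rN}_+$ partitioned into $r\times r$ blocks $Q_{ij}\in\mathbb{R}^{N\times N}$, and by the definition of $\widetilde{\Sigma}_{n,2d}^r(\mathcal{E},0)$ we may take $Q$ so that $Q_{ij}=0$ whenever $(i,j)\notin\mathcal{E}^*$. Reading $Q$ as a scalar symmetric matrix of size $rN$ with rows and columns indexed by pairs $(i,\alpha)$, $i\in\mathcal{V}$, $\alpha\in\{1,\dots,N\}$, this block-sparsity is exactly the statement $Q\in\mathbb{S}^{rN}(\widehat{\mathcal{E}},0)$, where the blown-up graph $\widehat{\mathcal{G}}(\widehat{\mathcal{V}},\widehat{\mathcal{E}})$ has vertex set $\widehat{\mathcal{V}}=\mathcal{V}\times\{1,\dots,N\}$ and $((i,\alpha),(j,\beta))\in\widehat{\mathcal{E}}^*$ if and only if $(i,j)\in\mathcal{E}^*$.

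The key structural fact I would establish first is that $\widehat{\mathcal{G}}$ is chordal and its maximal cliques are precisely $\widehat{\mathcal{C}}_k=\mathcal{C}_k\times\{1,\dots,N\}$ for $k=1,\dots,t$. Chordality follows by lifting a perfect elimination ordering of $\mathcal{G}$: order $\widehat{\mathcal{V}}$ so that $(i,\alpha)$ precedes $(j,\beta)$ when $i$ precedes $j$ in the PEO of $\mathcal{G}$, breaking ties inside a block arbitrarily; then the higher-numbered neighbours of $(i,\alpha)$ are the remaining vertices of its own (complete) block together with all $(j,\gamma)$ such that $j$ is a higher neighbour of $i$, which form a clique because the higher neighbours of $i$ form a clique in $\mathcal{G}$. (Alternatively, the clique tree of $\mathcal{G}$ with each $\mathcal{C}_k$ replaced by $\widehat{\mathcal{C}}_k$ still satisfies the running-intersection property, since $\widehat{\mathcal{C}}_i\cap\widehat{\mathcal{C}}_j=(\mathcal{C}_i\cap\mathcal{C}_j)\times\{1,\dots,N\}$.) That the $\widehat{\mathcal{C}}_k$ are exactly the maximal cliques is a short argument via the projection $\widehat{\mathcal{V}}\to\mathcal{V}$ and maximality of the $\mathcal{C}_k$. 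Finally, with the natural orderings the clique-indexing matrix of $\widehat{\mathcal{C}}_k$ factors as $E_{\widehat{\mathcal{C}}_k}=E_{\mathcal{C}_k}\otimes I_N$.

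For the ``only if'' direction I would then apply Theorem~\ref{T:ChordalDecompositionTheorem} to $Q\in\mathbb{S}^{rN}_+(\widehat{\mathcal{E}},0)$, obtaining $Q_k\succeq 0$ of size $|\mathcal{C}_k|N$ with $Q=\sum_{k=1}^t(E_{\mathcal{C}_k}^\tr\otimes I_N)\,Q_k\,(E_{\mathcal{C}_k}\otimes I_N)$. Setting $P_k(x)=(I_{|\mathcal{C}_k|}\otimes v_d(x))^\tr Q_k (I_{|\mathcal{C}_k|}\otimes v_d(x))$ gives $P_k(x)\in\Sigma_{n,2d}^{|\mathcal{C}_k|}$ by Lemma~\ref{Lemma:SOSlemma}, and substituting into~\eqref{E:matrixSOS} and using the mixed-product identity $(E_{\mathcal{C}_k}\otimes I_N)(I_r\otimes v_d(x))=(I_{|\mathcal{C}_k|}\otimes v_d(x))E_{\mathcal{C}_k}$ collapses the sum to $P(x)=\sum_k E_{\mathcal{C}_k}^\tr P_k(x) E_{\mathcal{C}_k}$. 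The ``if'' direction reverses this: given such $P_k(x)$ with Gram matrices $Q_k\succeq 0$, the matrix $Q=\sum_k(E_{\mathcal{C}_k}^\tr\otimes I_N)Q_k(E_{\mathcal{C}_k}\otimes I_N)$ is PSD, vanishes on every block $(i,j)$ with $(i,j)\notin\mathcal{E}^*$ (no clique contains both $i$ and $j$), hence forces $p_{ij}(x)=0$ there, and the same Kronecker computation shows $Q$ is a Gram matrix of $P(x)$; therefore $P(x)\in\widetilde{\Sigma}_{n,2d}^r(\mathcal{E},0)$. I expect the main obstacle to be the first step — verifying cleanly that the blown-up graph is chordal with the stated maximal cliques — since everything afterwards is bookkeeping with Kronecker products and a direct invocation of Theorem~\ref{T:ChordalDecompositionTheorem}.
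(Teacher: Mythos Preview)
Your proposal is correct and follows essentially the same route as the paper: pass to the block-sparse Gram matrix, observe that its sparsity pattern is governed by a ``blown-up'' graph with maximal cliques $\mathcal{C}_k\times\{1,\dots,N\}$ and indexing matrices $E_{\mathcal{C}_k}\otimes I_N$, apply Theorem~\ref{T:ChordalDecompositionTheorem} to $Q$, and unwind via the mixed-product identity $(E_{\mathcal{C}_k}\otimes I_N)(I_r\otimes v_d(x))=(I_{|\mathcal{C}_k|}\otimes v_d(x))E_{\mathcal{C}_k}$. The only notable difference is that the paper outsources the chordality of the blown-up graph to a citation, whereas you sketch a direct PEO/clique-tree argument; you also spell out the ``if'' direction more explicitly than the paper, which leaves it implicit in the chain of equivalences.
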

 \begin{equation*} 
        P(x)= \sum_{k=1}^{t} E_{\mathcal{C}_k}^\tr P_k(x) E_{\mathcal{C}_k}.
 \end{equation*}

\begin{proof}
    The proof, organized in three steps, combines the Gram representation of SOS matrices with Theorem~\ref{T:ChordalDecompositionTheorem}.

    \emph{Step 1 (Sparse Gram matrix):} We observe that $P(x) \in \widetilde{\Sigma}_{n,2d}^r(\mathcal{E},0)$ if and only if it admits a sparse Gram matrix $Q \in \mathbb{S}^{rN}_+$, where the $(i,j)$-th block $Q_{ij}=Q_{ji}^\tr = 0, \forall \, (i,j) \notin \mathcal{E}^*$. This means the Gram matrix $Q \in \mathbb{S}^{rN}_+$ has a block sparsity pattern defined by $\mathcal{G}(\mathcal{V},\mathcal{E})$. In fact, as shown in~\cite{ZKSP2017scalable}, this kind of sparsity pattern is also chordal. 

    To be precise, we define a hyper-graph $\widetilde{\mathcal{G}}(\widetilde{\mathcal{V}},\widetilde{\mathcal{E}})$ with a hyper-node set defined as
    $$
        \widetilde{\mathcal{V}} = \{\underbrace{1,\ldots,N}_{1}, \underbrace{N+1, \ldots, 2N}_2, \ldots, \underbrace{(r-1)N+1, \ldots ,rN}_{r}\},
    $$
    and a hyper-edge set defined as
    $
        \widetilde{\mathcal{E}} = \bigcup_{k=1}^t \widetilde{\mathcal{C}}_k \times \widetilde{\mathcal{C}}_k,
    $
    where the hyper-clique $\widetilde{\mathcal{C}}_k$ is defined as
    \begin{equation} \label{E:HyperClique}
        \widetilde{\mathcal{C}}_k = \{(j-1)N+1,\ldots,jN \mid j \in \mathcal{C}_k\}, k = 1, \ldots, t.
    \end{equation}
    Then, the sparsity pattern of the Gram matrix $Q$ can be described by $\widetilde{\mathcal{G}}(\widetilde{\mathcal{V}},\widetilde{\mathcal{E}})$, \emph{i.e.}, $Q \in \mathbb{S}_+^{rN}(\widetilde{\mathcal{E}},0)$.

    Moreover, if $\mathcal{G}(\mathcal{V},\mathcal{E})$ is chordal with its maximal cliques as $\mathcal{C}_1, \ldots, \mathcal{C}_p$, it is shown in~\cite{ZKSP2017scalable} that the hyper-graph $\widetilde{\mathcal{G}}(\widetilde{\mathcal{V}},\widetilde{\mathcal{E}})$ is also chordal with a set of maximal cliques $\widetilde{\mathcal{C}}_1, \ldots, \widetilde{\mathcal{C}}_t$.

    \emph{Step 2 (Block chordal decomposition):} According to Theorem~\ref{T:ChordalDecompositionTheorem}, the Gram matrix $Q \in \mathbb{S}_+^{rN}(\widetilde{\mathcal{E}},0)$ if and only if there exists a decomposition
    $$
        Q = \sum_{k=1}^t {E}_{\widetilde{\mathcal{C}}_k}^TQ_k{E}_{\widetilde{\mathcal{C}}_k}, \vspace{-1mm}
    $$
    where $Q_k \in \mathbb{S}_+^{|\widetilde{\mathcal{C}}_k|}, k = 1, \ldots, t$. Also, combining~\eqref{E:IndexMatrix} with~\eqref{E:HyperClique}, it is not difficult to see
    \begin{equation} \label{E:IndexMatrixWide}
        {E}_{\widetilde{\mathcal{C}}_k} =  {E}_{{\mathcal{C}}_k} \otimes I_N, k =1, \ldots, t.
    \end{equation}

    \emph{Step 3 (SOS matrix decomposition):} 
    In the context of SOS matrices, we have
    \begin{equation} \label{E:SOSdecomposition_s1}
        \begin{aligned}
            P(x) &= \left(I_r \otimes v_d(x)\right)^\tr  Q \left(I_r \otimes v_d(x)\right) \\
                & = \left(I_r \otimes v_d(x)\right)^\tr  \left(\sum_{k=1}^t {E}_{\widetilde{\mathcal{C}}_k}^TQ_k{E}_{\widetilde{\mathcal{C}}_k}\right) \left(I_r \otimes v_d(x)\right) \\
                & = \sum_{k=1}^t \left[ \left(I_r \otimes v_d(x)\right)^\tr {E}_{\widetilde{\mathcal{C}}_k}^TQ_k{E}_{\widetilde{\mathcal{C}}_k} \left(I_r \otimes v_d(x)\right) \right],
        \end{aligned}
    \end{equation}
    Furthermore, using the properties of the Kronecker product and~\eqref{E:IndexMatrixWide} we obtain
    \begin{equation} \label{E:IndexMatrixSOS}
        \begin{aligned}
        {E}_{\widetilde{\mathcal{C}}_k} \left(I_r \otimes v_d(x)\right) &= \left( {E}_{{\mathcal{C}}_k} \otimes I_N \right) \cdot \left(I_r \otimes v_d(x)\right) \\
        & = {E}_{{\mathcal{C}}_k} \otimes v_d(x) \\
        & = \left( I_{|\mathcal{C}_k|} \otimes v_d(x) \right) \cdot \left({E}_{{\mathcal{C}}_k} \otimes 1\right) \\
        & = \left( I_{|\mathcal{C}_k|} \otimes v_d(x) \right) \cdot {E}_{{\mathcal{C}}_k}.
        \end{aligned}
    \end{equation}

    Substituting~\eqref{E:IndexMatrixSOS} into~\eqref{E:SOSdecomposition_s1} yields
    $$
        \begin{aligned}
                P(x) & = \sum_{k=1}^t \left[  {E}_{{\mathcal{C}}_k}^\tr  \left( I_{|\mathcal{C}_k|} \otimes v_d(x) \right)^\tr Q_k\left( I_{|\mathcal{C}_k|} \otimes v_d(x) \right) {E}_{{\mathcal{C}}_k} \right] \\
                & = \sum_{k=1}^t {E}_{{\mathcal{C}}_k}^\tr  P_k(x) {E}_{{\mathcal{C}}_k},
            \end{aligned}
    $$
    where $P_k(x) \in \Sigma_{n,2d}^{|\mathcal{C}_k|} , k = 1,\ldots, t$.

\end{proof}

The proof of Theorem~\ref{T:SOSDecompositionTheorem} is based on the perspective of hyper-graphs, combining the Gram representation of SOS matrices (\emph{i.e.}, Lemma~\ref{Lemma:SOSlemma}) with the normal chordal decomposition result (\emph{i.e.}, Theorem~\ref{T:ChordalDecompositionTheorem}). Note that Theorem~\ref{T:SOSDecompositionTheorem} offers a necessary and sufficient condition for checking the membership of $\widetilde{\Sigma}_{n,2d}^r(\mathcal{E},0)$, which is a subset of SOS matrices since $p_{ij}(x) = 0$ does not require $Q_{ij} = 0$ in general.

Given $P(x) \in \widetilde{\Sigma}_{n,2d}^r(\mathcal{E},0)$, we can construct the decomposed SOS matrices $P_k(x)$:
\begin{enumerate}
  \item Find a sparse Gram matrix $Q \in \mathbb{S}^{rN}_+(\widetilde{\mathcal{E}},0)$;
  \item Perform a normal chordal decomposition $Q = \sum_{k=1}^t {E}_{\widetilde{\mathcal{C}}_k}^TQ_k{E}_{\widetilde{\mathcal{C}}_k}$ (\emph{e.g.},~\cite[Chapter 9]{vandenberghe2014chordal});
  \item Then, the decomposed SOS matrices are $P_k(x) = \left(I_{|\mathcal{C}_k|} \otimes v_d(x) \right)^\tr Q_k\left( I_{|\mathcal{C}_k|} \otimes v_d(x) \right), k = 1, \ldots, t$.
\end{enumerate}

For the example shown in~\eqref{E:ExSOSdecomposition}, the monomial basis is $v_d(x) = [1,x]^\tr$, and we can find a sparse Gram matrix
$$
	Q = \left[
	\begin{array}{cc|cc|cc}
	1 & 0 & 0 & 0.4 & 0 & 0\\
	0 & 1 & 0.6 & 0 & 0 & 0\\
	\hline
	0 & 0.6 & 3 & -1 & 1 & 0.8\\
	0.4 & 0 & -1 & 1 & 0.2 & 0\\
    \hline	
    0 & 0 &1 & 0.2 & 2 &  0 \\
    0 & 0 &0.8 & 0 & 0  & 1
	\end{array}
	\right] \in \mathbb{S}^{6}_+(\widetilde{\mathcal{E}},0).
$$
For this particular matrix $Q$, we have a decomposition
$$
\begin{aligned}
    Q_1 &=
	\left[
	\begin{array}{cc|cc}
	1 & 0 & 0 & 0.4 \\
	0 & 1 & 0.6 & 0 \\
	\hline
	0 & 0.6 & 1.11 & -0.545 \\
	0.4 & 0 & -0.545 & 0.56 \\
	\end{array}
	\right] \in \mathbb{S}^{4}_+. \\
    Q_2 &= \left[
	\begin{array}{cc|cc}
	 1.89 & -0.455 & 1 & 0.8\\
	 -0.455 & 0.44 & 0.2 & 0\\
    \hline	
    1 & 0.2 & 2 &  0 \\
    0.8 & 0 & 0  & 1
	\end{array}
	\right] \in \mathbb{S}^{4}_+.
\end{aligned}
$$
Then, an SOS decomposition for~\eqref{E:ExSOSdecomposition} is given as
$$
    \begin{aligned}
        P_1(x) &= \begin{bmatrix}
            x^2+1 & x \\
            x& 0.56x^2-1.09x+1.11 \\
        \end{bmatrix} \in \Sigma_{1,2}^2 \\
        P_2(x) &=\begin{bmatrix}
        0.44x^2-0.91x+1.89 & x+1\\
        x+1 & x^2+2
    \end{bmatrix}  \in \Sigma_{1,2}^2
    \end{aligned}
$$

Here, we emphasize that the main interest of Theorem~\ref{T:SOSDecompositionTheorem} is not on computing an actual SOS decomposition. Instead, this theorem offers a computationally efficient way to check
if a matrix $P(x)$ belongs to $\widetilde{\Sigma}_{n,2d}^{r}(\mathcal{E},0)$,
which can enable the solution of large matrix-valued SOS programs (see Section~\ref{Section:Application}).

\subsection{Completion of sparse SOS matrices}

Here, we give an analogue result to Theorem~\ref{T:ChordalCompletionTheorem} for partial SOS matrices. Given a graph $\mathcal{G}(\mathcal{V},\mathcal{E})$, we say $P(x)$ is a partial symmetric polynomial matrix if $p_{ij}(x)=p_{ji}(x)$ are given when $(i,j) \in \mathcal{E}^*$.  Moreover, we say that $F(x)$ is an SOS completion of the partial symmetric matrix $P(x)$ if $F(x)$ is SOS and $F_{ij}(x) = P_{ij}(x)$ when $(i, j) \in \mathcal{E}^*$. Precisely, we define a set of SOS completable matrices as
\begin{equation*}
    \begin{aligned}
        \Sigma_{n,2d}^r(\mathcal{E},?)=\bigg\{P(x): \mathbb{R}^n &\to \mathbb{S}^r(\mathcal{E},0) \big\vert  \exists F(x) \in \Sigma_{n,2d}^r,  \\
         & \,F_{ij}(x) = P_{ij}(x),\; \forall (i,j) \in \mathcal{E}^*\bigg\}.
    \end{aligned}
\end{equation*}

For instance, the matrix in~\eqref{E:ExSOScompletion} is a partial symmetric polynomial matrix defined by the graph in Fig.~\ref{F:ChordalGraph}(a), and we will show below that matrix is also SOS completable.

\begin{theorem}[SOS matrix completion] \label{T:SOSCompletion}
        Let $\mathcal{G}(\mathcal{V},\mathcal{E})$ be a chordal graph with maximal cliques $\{\mathcal{C}_1,\mathcal{C}_2, \ldots, \mathcal{C}_t\}$. Then, $P(x) \in \Sigma_{n,2d}^r(\mathcal{E},?)$ if and only if
        \begin{equation} \label{E:SOScondition}
             P_k(x) = E_{\mathcal{C}_k}P(x)E_{\mathcal{C}_k}^\tr \in \Sigma_{n,2d}^{|\mathcal{C}_k|}, \; k = 1, \ldots,t,
        \end{equation}
        and the Gram matrix $Q_k$ corresponding to each $P_k(x)$ satisfies a consistency condition: elements of $Q_k$ that map to the same entries of the global Gram matrix $Q$, which represents the original polynomial matrix $P(x)$, are identical. Mathematically, it requires
        \begin{equation} \label{E:CommonGram}
            \begin{aligned}
                {E}_{\widetilde{\mathcal{C}}_i \cap \widetilde{\mathcal{C}}_j} & \left( {E}_{\widetilde{\mathcal{C}}_i}^\tr Q_i {E}_{\widetilde{\mathcal{C}}_i} - {E}_{\widetilde{\mathcal{C}}_j}^\tr Q_j {E}_{\widetilde{\mathcal{C}}_j}\right) {E}_{\widetilde{\mathcal{C}}_i \cap \widetilde{\mathcal{C}}_j}^\tr \\
                &\qquad \qquad \qquad \qquad \quad = 0, \forall \widetilde{\mathcal{C}}_i \cap \widetilde{\mathcal{C}}_j \neq \emptyset,
            \end{aligned}
        \end{equation}
        where $\widetilde{\mathcal{C}}_i, i = 1, \ldots, t$ are the hyper-cliques defined in~\eqref{E:HyperClique}.
\end{theorem}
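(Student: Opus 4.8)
The plan is to transplant the machinery from the proof of Theorem~\ref{T:SOSDecompositionTheorem}: work with the hyper-graph $\widetilde{\mathcal{G}}(\widetilde{\mathcal{V}},\widetilde{\mathcal{E}})$, which is chordal with maximal cliques $\widetilde{\mathcal{C}}_1,\ldots,\widetilde{\mathcal{C}}_t$, use Lemma~\ref{Lemma:SOSlemma} to translate every statement about $P(x)$ and the local pieces $P_k(x)$ into a statement about Gram matrices, and then invoke the scalar chordal completion result (Theorem~\ref{T:ChordalCompletionTheorem}) on the hyper-graph. The only algebraic identity I need, which follows from~\eqref{E:IndexMatrixWide} by the same Kronecker-product manipulations as in~\eqref{E:IndexMatrixSOS}, is $E_{\mathcal{C}_k}\,(I_r\otimes v_d(x))^\tr = (I_{|\mathcal{C}_k|}\otimes v_d(x))^\tr\,E_{\widetilde{\mathcal{C}}_k}$ together with its transpose $(I_r\otimes v_d(x))\,E_{\mathcal{C}_k}^\tr = E_{\widetilde{\mathcal{C}}_k}^\tr\,(I_{|\mathcal{C}_k|}\otimes v_d(x))$; it says that the operation $P(x)\mapsto E_{\mathcal{C}_k}P(x)E_{\mathcal{C}_k}^\tr$ of extracting a principal submatrix of an SOS matrix corresponds, at the level of Gram matrices, to $Q\mapsto E_{\widetilde{\mathcal{C}}_k}QE_{\widetilde{\mathcal{C}}_k}^\tr$. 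I will prove the two implications separately.

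\emph{Necessity.} Suppose $F(x)\in\Sigma_{n,2d}^r$ is an SOS completion of $P(x)$, with Gram matrix $Q\succeq 0$ as in~\eqref{E:matrixSOS}. Since $P(x)$ and $F(x)$ agree on all entries indexed by a clique, $P_k(x) = E_{\mathcal{C}_k}P(x)E_{\mathcal{C}_k}^\tr = E_{\mathcal{C}_k}F(x)E_{\mathcal{C}_k}^\tr$, and the identity above turns this into $P_k(x) = (I_{|\mathcal{C}_k|}\otimes v_d(x))^\tr Q_k (I_{|\mathcal{C}_k|}\otimes v_d(x))$ with $Q_k := E_{\widetilde{\mathcal{C}}_k}QE_{\widetilde{\mathcal{C}}_k}^\tr\succeq 0$, which is exactly~\eqref{E:SOScondition}. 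For~\eqref{E:CommonGram}, observe that $E_{\widetilde{\mathcal{C}}_k}^\tr Q_k E_{\widetilde{\mathcal{C}}_k} = (E_{\widetilde{\mathcal{C}}_k}^\tr E_{\widetilde{\mathcal{C}}_k})\,Q\,(E_{\widetilde{\mathcal{C}}_k}^\tr E_{\widetilde{\mathcal{C}}_k})$ is just $Q$ with the rows and columns outside $\widetilde{\mathcal{C}}_k$ set to zero; since $\widetilde{\mathcal{C}}_i\cap\widetilde{\mathcal{C}}_j$ lies inside both $\widetilde{\mathcal{C}}_i$ and $\widetilde{\mathcal{C}}_j$, restricting either inflated matrix to that index set returns $E_{\widetilde{\mathcal{C}}_i\cap\widetilde{\mathcal{C}}_j}\,Q\,E_{\widetilde{\mathcal{C}}_i\cap\widetilde{\mathcal{C}}_j}^\tr$, so their difference vanishes.

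\emph{Sufficiency.} Assume~\eqref{E:SOScondition} holds, with Gram matrices $Q_k\succeq 0$ satisfying~\eqref{E:CommonGram}. Condition~\eqref{E:CommonGram} is precisely the statement that the inflated matrices $E_{\widetilde{\mathcal{C}}_k}^\tr Q_k E_{\widetilde{\mathcal{C}}_k}$, $k=1,\ldots,t$, agree on every overlap of their supports, so they glue into a well-defined partial symmetric matrix $\bar{Q}\in\mathbb{S}^{rN}(\widetilde{\mathcal{E}},0)$ with $E_{\widetilde{\mathcal{C}}_k}\bar{Q}E_{\widetilde{\mathcal{C}}_k}^\tr = Q_k\succeq 0$ for every $k$. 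Because $\widetilde{\mathcal{G}}$ is chordal, Theorem~\ref{T:ChordalCompletionTheorem} gives a PSD completion $Q\in\mathbb{S}^{rN}_+$ of $\bar{Q}$, which still satisfies $E_{\widetilde{\mathcal{C}}_k}QE_{\widetilde{\mathcal{C}}_k}^\tr = Q_k$. Setting $F(x) := (I_r\otimes v_d(x))^\tr Q (I_r\otimes v_d(x))$, Lemma~\ref{Lemma:SOSlemma} shows $F(x)\in\Sigma_{n,2d}^r$; and for any $(i,j)\in\mathcal{E}^*$, picking a maximal clique $\mathcal{C}_k$ with $i,j\in\mathcal{C}_k$ and reusing the identity gives $E_{\mathcal{C}_k}F(x)E_{\mathcal{C}_k}^\tr = (I_{|\mathcal{C}_k|}\otimes v_d(x))^\tr Q_k (I_{|\mathcal{C}_k|}\otimes v_d(x)) = P_k(x) = E_{\mathcal{C}_k}P(x)E_{\mathcal{C}_k}^\tr$, hence $F_{ij}(x) = P_{ij}(x)$. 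Thus $F(x)$ is an SOS completion of $P(x)$, i.e.\ $P(x)\in\Sigma_{n,2d}^r(\mathcal{E},?)$.

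The Kronecker-product bookkeeping and the fact that $\widetilde{\mathcal{G}}$ is chordal are routine and already appear in the proof of Theorem~\ref{T:SOSDecompositionTheorem}. The genuinely new point --- and the reason a consistency condition appears here but not in the scalar case of Theorem~\ref{T:ChordalCompletionTheorem} --- is the gluing step: a polynomial matrix does not have a unique Gram matrix, so SOS certificates $Q_k$ chosen independently for the local blocks $P_k(x)$ need not be mutually compatible, and without~\eqref{E:CommonGram} there is simply no global partial Gram matrix on which to run the scalar completion theorem. The crux is therefore to verify that~\eqref{E:CommonGram} encodes exactly the right amount of compatibility --- neither too weak to permit gluing, nor stronger than necessary.
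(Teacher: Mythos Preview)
Your proof is correct and follows essentially the same route as the paper: pass to the chordal hyper-graph $\widetilde{\mathcal{G}}$, translate between principal submatrices of $P(x)$ and of the Gram matrix via the Kronecker identity~\eqref{E:IndexMatrixSOS}, and then invoke Theorem~\ref{T:ChordalCompletionTheorem}. If anything, you are more careful than the paper in two places --- you explain \emph{why} the $Q_k$ extracted from a global $Q$ satisfy~\eqref{E:CommonGram}, and in the sufficiency direction you explicitly verify that the completed $F(x)$ matches $P(x)$ on every specified entry --- both of which the paper leaves implicit.
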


\begin{proof}
    Similar to the proof of Theorem~\ref{T:SOSDecompositionTheorem}, we rely on the hyper-graph $\widetilde{\mathcal{G}}(\widetilde{\mathcal{V}},\widetilde{\mathcal{E}})$, which is chordal with a set of maximal cliques $\widetilde{\mathcal{C}}_1, \ldots, \widetilde{\mathcal{C}}_t$.

    \emph{$\Leftarrow$:} If $P(x)\in \Sigma_{n,2d}^r(\mathcal{E},?)$, then there exists a SOS matrix $F(x)$ with a Gram matrix $Q \in \mathbb{S}^{rN}_+$, such that
    $$
        P_k(x) = E_{\mathcal{C}_k}P(x)E_{\mathcal{C}_k}^\tr = E_{\mathcal{C}_k}F(x)E_{\mathcal{C}_k}^\tr, k = 1, \ldots t.
    $$
    Also, using a property similar to~\eqref{E:IndexMatrixSOS}, we have
    $$
        \begin{aligned}
            E_{\mathcal{C}_k}F(x)E_{\mathcal{C}_k}^\tr &= E_{\mathcal{C}_k}\left( (I_r \otimes v_d(x))^\tr Q (I_r \otimes v_d(x))\right)E_{\mathcal{C}_k}^\tr \\
            & =  (I_{|\mathcal{C}_k|} \otimes v_d(x))^\tr Q_k (I_{|\mathcal{C}_k|} \otimes v_d(x)),
        \end{aligned}
    $$
    where
    \begin{equation} \label{E:subGramMatrix}
        Q_k = E_{\widetilde{\mathcal{C}}_k}QE_{\widetilde{\mathcal{C}}_k}^\tr \in \mathbb{S}^{|\mathcal{C}_k|N}_+.
    \end{equation}
    Therefore, $P_k(x) \in \Sigma_{n,2d}^{|\mathcal{C}_k|}$, and the Gram matrices $Q_k$ in~\eqref{E:subGramMatrix} satisfy the \emph{consistency Gram matrix} condition~\eqref{E:CommonGram}.

    \emph{$\Rightarrow$:} If we have conditions~\eqref{E:SOScondition} and~\eqref{E:CommonGram}, then we can form a partial symmetric matrix $Q$ with $Q_k= E_{\widetilde{\mathcal{C}}_k}QE_{\widetilde{\mathcal{C}}_k}^\tr, k=1,\ldots,t$. Since $Q_k \succeq 0$ and graph $\widetilde{\mathcal{G}}(\widetilde{\mathcal{V}},\widetilde{\mathcal{E}})$ is chordal, Theorem~\ref{T:ChordalCompletionTheorem} ensures that
    $
        Q \in \mathbb{S}_+^{rN}(\widetilde{\mathcal{E}},?).
    $
    Then we can find a PSD completion $\hat{Q} \succeq 0$ for the partial symmetric matrix $Q$. In the context of polynomial matrices, we have found an SOS completion $F(x)$ for $P(x)$, \emph{i.e.},
    \begin{equation} \label{E:SOScompletion}
        F(x) = (I_r \otimes v_d(x))^\tr \hat{Q} (I_r \otimes v_d(x)) \in \Sigma_{n,2d}^r.
    \end{equation}
    Therefore, $P(x) \in \Sigma_{n,2d}^r(\mathcal{E},?)$.
\end{proof}

The proof of Theorem~\ref{T:SOSCompletion} is similar to that of Theorem~\ref{T:SOSDecompositionTheorem}, both of which utilize the perspective of hypergraphs and then apply the normal results of chordal decomposition and completion. Given $P(x) \in \Sigma_{n,2d}^r(\mathcal{E},?)$, we can find an SOS completion using the following steps:
\begin{enumerate}
  \item Find a PSD completable Gram matrix, $Q \in \mathbb{S}^{rN}_+(\widetilde{\mathcal{E}},?)$;
  \item Find a PSD completion $\hat{Q}$ using any PSD completion algorithm (\emph{e.g.},~\cite[Chapter 10]{vandenberghe2014chordal} and~\cite[Section 2]{fukuda2001exploiting});
  \item Then, an SOS matrix is given by~\eqref{E:SOScompletion}.
\end{enumerate}

Using this procedure, we are able to find an SOS completion for the example~\eqref{E:ExSOScompletion}:
$$
{
    \begin{bmatrix}
        x^2 + 1 & x & 0.3x^2+0.6x+0.3 \\
        x& x^2 - 2x + 2.5& x + 1 \\
        0.3x^2+0.6x+0.3 & x+1& x^2 + 2
    \end{bmatrix}\!\!.
    }
$$

\vspace{0.5ex}
\section{Application to matrix-valued SOS programs} \label{Section:Application}

The SOS decomposition can be readily applied to exploit sparsity in matrix-valued SOS programs. Consider the following matrix-valued SOS program
\begin{equation} \label{E:matrixSOSprogram}
    \begin{aligned}
        \min_{u}\quad & w^\tr u \\[-1ex]
        \text{subject to} \quad  & P(x) = P_0(x) - \sum_{i=1}^h u_iP_i(x),
        \\ & P(x) \text{ is SOS},
    \end{aligned}
\end{equation}
where $u \in \mathbb{R}^h$ is the decision variable, $w \in \mathbb{R}^h$ defines a linear objective function, and $P_0(x),\,\ldots,\,P_t(x)$ are given $r \times r$ symmetric polynomial matrices with a common sparsity pattern $\mathcal{E}$. Note that matrix-valued SOS programs have found applications in robust semidefinite programs~\cite{scherer2006matrix} and control theory~\cite{peet2009positive}.
We assume that $\mathcal{E}$ is chordal; otherwise, a chordal extension can be found~\cite{vandenberghe2014chordal}. Clearly,~\eqref{E:matrixSOSprogram} is equivalent to
\begin{equation} \label{E:matrixSOSprogram_s1}
    \begin{aligned}
        \min_{u}\quad & w^\tr u \\[-1ex]
        \text{subject to} \quad  & P(x) = P_0(x) - \sum_{i=1}^h u_iP_i(x),
        \\ & P(x) \in \Sigma_{n,2d}^r(\mathcal{E},0).
    \end{aligned}
\end{equation}

To exploit the pattern $\mathcal{E}$, we replace $P(x) \in \Sigma_{n,2d}^r(\mathcal{E},0)$ by the stronger condition $P(x) \in  \widetilde{\Sigma}_{n,2d}^r(\mathcal{E},0)$, \emph{i.e.},
\begin{equation} \label{E:matrixSOSprogram_s2}
    \begin{aligned}
        \min_{u}\quad & w^\tr u \\[-1ex]
        \text{subject to} \quad  & P(x) = P_0(x) - \sum_{i=1}^h u_iP_i(x),
        \\ & P(x) \in \widetilde{\Sigma}_{n,2d}^r(\mathcal{E},0).
    \end{aligned}
\end{equation}
Then, Theorem~\ref{T:SOSDecompositionTheorem} allows us to decompose the single large SOS constraint $\widetilde{\Sigma}_{n,2d}^r(\mathcal{E},0)$ with a set of coupled SOS constraints with smaller dimensions.
This can reduce the computational cost of~\eqref{E:matrixSOSprogram_s2} significantly 
if the size of the cliques $\mathcal{C}_k, k = 1, \ldots, t$ is small. %
To demonstrate this, we consider the following special matrix-valued SOS program
\begin{equation} \label{E:matrixSOSprogram_ex}
    \begin{aligned}
        \min_{\gamma}\quad & \gamma \\[-1ex]
        \text{subject to} \quad  & P(x) + \gamma I \; \text{is SOS},
    \end{aligned}
\end{equation}
where $P(x)$ is a $r\times r$ polynomial matrix with an ``arrow'' sparsity pattern, defined as
\begin{equation} \label{E:matrixSOSprogram_exP}
        P(x) = \begin{bmatrix} p_1(x) & p_2(x) & \ldots & p_2(x) \\ p_2(x) & p_3(x) & & \\ \vdots & & \ddots & \\ p_2(x) & & & p_3(x)\end{bmatrix}
\end{equation}
with $p_1(x) = r(x_1^2 + x_2^2 +1), p_2(x) = x_1 + x_2, p_3(x) = x_1^2 + x_2^2 +1$. Note that problem~\eqref{E:matrixSOSprogram_ex} provides an upper bound $-\gamma$ for the minimum eigenvalue of $P(x), \forall x \in \mathbb{R}^n$. Also,
the graph representing the sparsity pattern of $P(x)$ in~\eqref{E:matrixSOSprogram_exP}, illustrated in Fig.~\ref{F:ChordalGraph}(b) for $r = 5$, is chordal and has maximal cliques $\mathcal{C}_k = \{1,k\}, k = 2, \ldots, r$.

We used YALMIP~\cite{lofberg2004yalmip} to solve problem~\eqref{E:matrixSOSprogram_ex} using SDPs, derived both from the usual SOS problem~\eqref{E:matrixSOSprogram_s1} and from the decomposed version of~\eqref{E:matrixSOSprogram_s2}. To solve the SDPs, we used SeDuMi~\cite{sturm1999using} with its default parameters on a PC with a 2.8 GHz Intel Corei7 CPU and 8GB of RAM.
%
%
Table~\ref{T:ResultsTime} lists the CPU time required to solve~\eqref{E:matrixSOSprogram_ex} using either~\eqref{E:matrixSOSprogram_s1} or~\eqref{E:matrixSOSprogram_s2}. We can see that the computational time was reduced significantly when using~\eqref{E:matrixSOSprogram_s2}. This is expected since a single large SOS constraint of dimension $r$ has been replaced by $r-1$ smaller SOS constraints on $2
\times 2$ polynomial matrices. Table~\ref{T:ResultsObjective} shows that using the stronger condition $P(x) \in \widetilde{\Sigma}_{n,2d}^r(\mathcal{E},0)$ brings no conservatism compared to the usual SOS methods for this particular example.


Note that we have $\widetilde{\Sigma}_{n,2d}^r(\mathcal{E},0) \subseteq {\Sigma}_{n,2d}^r(\mathcal{E},0)$ in general. Hence, it may bring certain conservatism when replacing~\eqref{E:matrixSOSprogram_s1} by~\eqref{E:matrixSOSprogram_s2}. This is the case, for instance, when solving~\eqref{E:matrixSOSprogram_ex} for the matrix
$$
        P(x) = \begin{bmatrix} p_1(x) & p_2(x) & p_3(x) \\ p_2(x) & p_4(x) &  0 \\ p_3(x) & 0 & p_5(x)\end{bmatrix},
    $$
    where $$
    \begin{aligned}
        p_1(x) &= 0.8x_1^2 + 0.9x_1x_2+0.3x_2^2 + 1.4x_1 +0.9 x_2+0.8, \\
        p_2(x) &= 0.3x_1+0.91x_2+0.2,\\
        p_3(x) &= 0.1x_1 + x_2+0.8, \\
        p_4(x) &= 0.4x_1^2+1.3x_1x_2+1.1x_2^2+1.4x_1+2.3x_2+1.3, \\
        p_5(x) &= 0.7x_1^2 + 1.3x_1x_2 + 0.9x_2^2 +x_1 + 1.1x_2+0.4.
    \end{aligned}
    $$
Solving the original SOS program~\eqref{E:matrixSOSprogram_s1} returns an objective value $\gamma_1 = 2.007$, while the optimal value of~\eqref{E:matrixSOSprogram_s2} is $\gamma_2 = 2.041$.
The conservatism comes from the fact that we enforce $Q_{23} = 0$ when $p_{23}(x) = 0$. Despite the potential conservatism, the formulation~\eqref{E:matrixSOSprogram_s2} provides a highly scalable way to deal with large sparse matrix-valued SOS programs, as confirmed in Table~\ref{T:ResultsTime}.

\begin{table}
    \centering
    \caption{CPU time (in seconds) required to solve~\eqref{E:matrixSOSprogram_ex} using different formulations.}
    \label{T:ResultsTime}
    \begin{tabular*}{\linewidth}{@{\extracolsep{\fill}}r| rrrrr}
    \toprule[1pt]
     Dimension $r$ & 10 & 20 & 30 & 40  & 50  \\
    \midrule
    Using~\eqref{E:matrixSOSprogram_s1}  & 0.25 & 4.1 & 72.6 & 425.2 & 1\,773.1 \\
    Using~\eqref{E:matrixSOSprogram_s2} &  0.11 & 0.13  &  0.23 & 0.25 & 0.38 \\
    \bottomrule[1pt]
    \end{tabular*}
\end{table}

\begin{table}
    \centering
    \caption{Objective value $\gamma$ for~\eqref{E:matrixSOSprogram_ex} using different formulations.}
    \label{T:ResultsObjective}
    \begin{tabular*}{\linewidth}{@{\extracolsep{\fill}}r| rrrrr}
    \toprule[1pt]
     Dimension $r$ & 10 & 20 & 30 & 40  & 50  \\
    \midrule
    Using~\eqref{E:matrixSOSprogram_s1}  & $-$0.8516 & $-$0.8403 & $-$0.8364 & $-$0.8344 & $-$0.8332 \\
    Using~\eqref{E:matrixSOSprogram_s2} &  $-$0.8516 & $-$0.8403 & $-$0.8364 & $-$0.8344 & $-$0.8332 \\
    \bottomrule[1pt]
    \end{tabular*}
    \vspace{-2mm}
\end{table}

\section{Conclusion} \label{Section:Conclusion}

We have introduced two theorems for the decomposition and completion of sparse SOS matrices. In particular, we proved that a subset of SOS matrices with chordal sparsity patterns can be decomposed into a sum of multiple SOS matrices of smaller dimensions. This property can be easily applied to exploit sparsity in matrix-valued SOS programs.

It should be noted that a notion of \emph{correlative sparsity techniques} has been proposed to exploit chordal sparsity in scalar polynomials~\cite{waki2006sums}. Also, an SOS matrix $P(x)$ can be established via an SOS condition on its scalar representation, \emph{i.e.}, $y^\tr P(x)y$ is SOS in $[x;y]$~\cite{gatermann2004symmetry}. In fact, it is not difficult to show that our decomposition result in Theorem~\ref{T:SOSDecompositionTheorem} corresponds to applying the correlative sparsity technique~\cite{waki2006sums} to the scalar polynomial $y^\tr P(x)y$. However, the scalar interpretation for the SOS completion result (\emph{i.e.}, Theorem~\ref{T:SOSCompletion}) is not clear and requires further investigation.

Finally, our preliminary numerical experiments on simple test problems demonstrate that exploiting chordal sparsity in matrix-valued SOS programs can bring dramatic computational savings at the cost of mild conservatism. Future work will try to confirm these observations in relevant practical applications, \emph{e.g.},~\cite{scherer2006matrix,peet2009positive}.

\balance
\bibliographystyle{IEEEtran}
\bibliography{Reference}

\begin{thebibliography}{10}
\providecommand{\url}[1]{#1}
\csname url@samestyle\endcsname
\providecommand{\newblock}{\relax}
\providecommand{\bibinfo}[2]{#2}
\providecommand{\BIBentrySTDinterwordspacing}{\spaceskip=0pt\relax}
\providecommand{\BIBentryALTinterwordstretchfactor}{4}
\providecommand{\BIBentryALTinterwordspacing}{\spaceskip=\fontdimen2\font plus
\BIBentryALTinterwordstretchfactor\fontdimen3\font minus
  \fontdimen4\font\relax}
\providecommand{\BIBforeignlanguage}[2]{{%
\expandafter\ifx\csname l@#1\endcsname\relax
\typeout{** WARNING: IEEEtran.bst: No hyphenation pattern has been}%
\typeout{** loaded for the language `#1'. Using the pattern for}%
\typeout{** the default language instead.}%
\else
\language=\csname l@#1\endcsname
\fi
#2}}
\providecommand{\BIBdecl}{\relax}
\BIBdecl

\bibitem{vandenberghe2014chordal}
L.~Vandenberghe and M.~S. Andersen, ``Chordal graphs and semidefinite
  optimization,'' \emph{Foundations and Trends{\textregistered} in
  Optimization}, vol.~1, no.~4, pp. 241--433, 2014.

\bibitem{candes2009exact}
E.~J. Cand{\`e}s and B.~Recht, ``Exact matrix completion via convex
  optimization,'' \emph{Found. Comput. Math.}, vol.~9, no.~6, p. 717, 2009.

\bibitem{chandrasekaran2011rank}
V.~Chandrasekaran, S.~Sanghavi, P.~A. Parrilo, and A.~S. Willsky,
  ``Rank-sparsity incoherence for matrix decomposition,'' \emph{SIAM J.
  Optimiz.}, vol.~21, no.~2, pp. 572--596, 2011.

\bibitem{agler1988positive}
J.~Agler, W.~Helton, S.~McCullough, and L.~Rodman, ``Positive semidefinite
  matrices with a given sparsity pattern,'' \emph{Linear algebra and its
  applications}, vol. 107, pp. 101--149, 1988.

\bibitem{griewank1984existence}
A.~Griewank and P.~L. Toint, ``On the existence of convex decompositions of
  partially separable functions,'' \emph{Math. Prog.}, vol.~28, no.~1, pp.
  25--49, 1984.

\bibitem{grone1984positive}
R.~Grone, C.~R. Johnson, E.~M. S{\'a}, and H.~Wolkowicz, ``Positive definite
  completions of partial hermitian matrices,'' \emph{Linear algebra and its
  applications}, vol.~58, pp. 109--124, 1984.

\bibitem{fukuda2001exploiting}
M.~Fukuda, M.~Kojima, K.~Murota, and K.~Nakata, ``Exploiting sparsity in
  semidefinite programming via matrix completion \protect{I}: General
  framework,'' \emph{SIAM J. Optimiz.}, vol.~11, no.~3, pp. 647--674, 2001.

\bibitem{andersen2010implementation}
M.~S. Andersen, J.~Dahl, and L.~Vandenberghe, ``Implementation of nonsymmetric
  interior-point methods for linear optimization over sparse matrix cones,''
  \emph{Math. Prog. Computation}, vol.~2, no. 3-4, pp. 167--201, 2010.

\bibitem{ZFPGW2017chordal}
Y.~Zheng, G.~Fantuzzi, A.~Papachristodoulou, P.~Goulart, and A.~Wynn, ``Chordal
  decomposition in operator-splitting methods for sparse semidefinite
  programs,'' \emph{arXiv preprint arXiv:1707.05058}, 2017.

\bibitem{ZFPGW2016}
\BIBentryALTinterwordspacing
------, ``{Fast ADMM for Semidefinite Programs with Chordal Sparsity},'' in
  \emph{Proc. 2017 Am. Control Conf.}\hskip 1em plus 0.5em minus 0.4em\relax
  Seattle, USA: IEEE, 2017, pp. 3335--3340. [Online]. Available:
  \url{http://arxiv.org/abs/1609.06068}
\BIBentrySTDinterwordspacing

\bibitem{sun2014decomposition}
Y.~Sun, M.~S. Andersen, and L.~Vandenberghe, ``Decomposition in conic
  optimization with partially separable structure,'' \emph{SIAM J. Optimiz.},
  vol.~24, no.~2, pp. 873--897, 2014.

\bibitem{andersen2014robust}
M.~Andersen, S.~Pakazad, A.~Hansson, and A.~Rantzer, ``Robust stability
  analysis of sparsely interconnected uncertain systems,'' \emph{Automatic
  Control, IEEE Transactions on}, vol.~59, no.~8, pp. 2151--2156, 2014.

\bibitem{ZMP2018Scalable}
Y.~Zheng, R.~P. Mason, and A.~Papachristodoulou, ``Scalable design of
  structured controllers using chordal decomposition,'' \emph{IEEE Transactions
  on Automatic Control}, vol.~63, no.~3, pp. 752--767, 2018.

\bibitem{dall2013distributed}
E.~Dall'Anese, H.~Zhu, and G.~B. Giannakis, ``Distributed optimal power flow
  for smart microgrids,'' \emph{IEEE Transactions on Smart Grid}, vol.~4,
  no.~3, pp. 1464--1475, 2013.

\bibitem{parrilo2003semidefinite}
P.~A. Parrilo, ``Semidefinite programming relaxations for semialgebraic
  problems,'' \emph{Math. Prog.}, vol.~96, no.~2, pp. 293--320, 2003.

\bibitem{gatermann2004symmetry}
K.~Gatermann and P.~A. Parrilo, ``Symmetry groups, semidefinite programs, and
  sums of squares,'' \emph{Journal of Pure and Applied Algebra}, vol. 192, no.
  1-3, pp. 95--128, 2004.

\bibitem{scherer2006matrix}
C.~W. Scherer and C.~W. Hol, ``Matrix sum-of-squares relaxations for robust
  semi-definite programs,'' \emph{Math. Prog.}, vol. 107, no. 1-2, pp.
  189--211, 2006.

\bibitem{kojima2003sums}
M.~Kojima, ``Sums of squares relaxations of polynomial semidefinite programs,''
  2003.

\bibitem{blekherman2012semidefinite}
G.~Blekherman, P.~A. Parrilo, and R.~R. Thomas, \emph{Semidefinite optimization
  and convex algebraic geometry}.\hskip 1em plus 0.5em minus 0.4em\relax SIAM,
  2012.

\bibitem{ahmadi2017improving}
A.~A. Ahmadi, G.~Hall, A.~Papachristodoulou, J.~Saunderson, and Y.~Zheng,
  ``Improving efficiency and scalability of sum of squares optimization: Recent
  advances and limitations,'' in \emph{IEEE 56th Annual Conference on Decision
  and Control (CDC)}, Dec 2017, pp. 453--462.

\bibitem{ZKSP2017scalable}
Y.~Zheng, M.~Kamgarpour, A.~Sootla, and A.~Papachristodoulou, ``Scalable
  analysis of linear networked systems via chordal decomposition,'' \emph{ECC
  2018, to appear}, 2018.

\bibitem{peet2009positive}
M.~M. Peet, A.~Papachristodoulou, and S.~Lall, ``Positive forms and stability
  of linear time-delay systems,'' \emph{SIAM Journal on Control and
  Optimization}, vol.~47, no.~6, pp. 3237--3258, 2009.

\bibitem{lofberg2004yalmip}
J.~L{\"o}fberg, ``\protect{YALMIP}: A toolbox for modeling and optimization in
  matlab,'' in \emph{Proc. IEEE Int. Symp. Computer Aided Control Syst.
  Design}.\hskip 1em plus 0.5em minus 0.4em\relax IEEE, 2004, pp. 284--289.

\bibitem{sturm1999using}
J.~F. Sturm, ``Using {SeDuMi} 1.02, a {MATLAB} toolbox for optimization over
  symmetric cones,'' \emph{Optimization methods and software}, vol.~11, no.
  1-4, pp. 625--653, 1999.

\bibitem{waki2006sums}
H.~Waki, S.~Kim, M.~Kojima, and M.~Muramatsu, ``Sums of squares and
  semidefinite program relaxations for polynomial optimization problems with
  structured sparsity,'' \emph{SIAM J. Optimiz.}, vol.~17, no.~1, pp. 218--242,
  2006.

\end{thebibliography}

\end{document}